\documentclass[11pt]{amsart}
\usepackage{stmaryrd}
\usepackage{amsmath}
\usepackage{cases}
\usepackage{mathrsfs}
\usepackage{amssymb}
\usepackage{amsfonts}
\usepackage{amssymb,amsfonts}

\setlength{\headheight}{8pt} \setlength{\textheight}{22.5cm}
\setlength{\textwidth}{16cm} \setlength{\oddsidemargin}{0cm}
\setlength{\evensidemargin}{0cm} \setlength{\topmargin}{0cm}

\allowdisplaybreaks

\begin{document}
\theoremstyle{plain}
\newtheorem{thm}{Theorem}[section]
\newtheorem{theorem}[thm]{Theorem}
\newtheorem{lemma}[thm]{Lemma}
\newtheorem{corollary}[thm]{Corollary}
\newtheorem{corollary*}[thm]{Corollary*}
\newtheorem{proposition}[thm]{Proposition}
\newtheorem{proposition*}[thm]{Proposition*}
\newtheorem{conjecture}[thm]{Conjecture}
\theoremstyle{definition}
\newtheorem{construction}{Construction}
\newtheorem{notations}[thm]{Notations}
\newtheorem{question}[thm]{Question}
\newtheorem{problem}[thm]{Problem}
\newtheorem{remark}[thm]{Remark}
\newtheorem{remarks}[thm]{Remarks}
\newtheorem{definition}[thm]{Definition}
\newtheorem{claim}[thm]{Claim}
\newtheorem{assumption}[thm]{Assumption}
\newtheorem{assumptions}[thm]{Assumptions}
\newtheorem{properties}[thm]{Properties}
\newtheorem{example}[thm]{Example}
\newtheorem{comments}[thm]{Comments}
\newtheorem{blank}[thm]{}
\newtheorem{observation}[thm]{Observation}
\newtheorem{defn-thm}[thm]{Definition-Theorem}

\newcommand{\sM}{{\mathcal M}}


\title{Hurwitz-Hodge integral identities from the cut-and-join equation}
      \author{Wei Luo, Shengmao Zhu}
        \address{Center of Mathematical Sciences, Zhejiang University, Hangzhou, Zhejiang 310027, China}
       \email{luowei1428@gmail.com, zhushengmao@gmail.com}
\keywords{Orbifold Hurwitz numbers, cut-and-join equation,
Hurwitz-Hodge integrals}

\subjclass{Primary 57N10}

\begin{abstract}
In this paper, we present some Hurwitz-Hodge integral identities
which are derived from the Laplace transform of the cut-and-join
equation for the orbifold Hurwitz numbers. As an application, we
prove a conjecture on Hurwitz-Hodge integral proposed by J. Zhou in
2008.

\end{abstract}
\maketitle

\section{Introduction}
The Gromov-Witten theory for symplectic orbifolds has been developed
by Chen-Ruan \cite{CR}. The algebraic part, the Gromov-Witten theory
for smooth DM stacks, was established by Abramovich-Graber-Vistoli
\cite{AGV}. By the virtual localization, all the orbifold
Gromov-Witten invariants for a toric DM stack descend to the
computation of the Hurwitz-Hodge integrals on the moduli space
$\overline{\mathcal{M}}_{g,\gamma}(BG)$ of twisted stable maps to
the classifying stack of a finite group $G$ \cite{Liu}. In
\cite{Zhou1}, J. Zhou described an effective algorithm to calculate
the Hurwitz-Hodge integrals. By applying Tseng's orbifold quantum
Riemann-Roch theorem \cite{Tseng}, Hurwitz-Hodge integrals can be
reconstructed from the descendant Hurwitz-Hodge integrals on
$\overline{\mathcal{M}}_{g,\gamma}(BG)$.

Recall that the descendant Hodge integrals on moduli space of curves
can be computed by the DVV recursion which is equivalent to the
Witten-Kontsevich theorem \cite{Witten,Ko}. An easy approach to DVV
recursion is by studying the cut-and-join equation for simple
Hurwitz numbers. Combining the famous ELSV formula,  one can obtain
a polynomial identity for linear Hodge integrals. Then it is direct
to derive the DVV recursion by looking at the highest terms in this
identity \cite{CLL,MZ}. Similarly, collecting the lowest terms, one
obtains a formula for $\lambda_g$-integrals  \cite{GJV,Zhu1}.

In \cite{JPT}, Johnson-Pandharipande-Tseng established the ELSV-type
formula for orbifold Hurwitz numbers which also satisfy the
cut-and-join equation. With the same technique used in \cite{EMS},
Bouchard-Serrano-Liu-Mulase \cite{BSLM} established the Laplace
transform of the cut-and-join equation for orbifold Hurwtiz numbers.
Starting from this formula, with the same method used in
\cite{GJV,CLL,Zhu2,MZ}, we obtain the following Hurwtiz-Hodge
integrals identities.

\begin{theorem} When $r\geq 1$ and $0\leq k_i\leq r-1$, for $1\leq i \leq
l$, the Hurwitz-Hodge integrals satisfies the following orbifold-DVV
recursion:
\begin{align}
&\langle\tau_{b_{L}}\rangle_g^{r,k_L}=r\sum_{j=2}^l\frac{C^{k_1,k_j}_{b_1,b_j}}{(2b_1+1)!!(2b_j-1)!!}\langle\tau_{b_1+b_j-1}\tau_{b_{L\setminus\{1,j\}}}
\rangle_g^{r,(r\langle \frac{k_1+k_j}{r}\rangle
,k_{L\setminus\{1,j\}})}\\\nonumber
&+\frac{r^2}{2}\sum_{\substack{a+b=k_1\\m+n=b_1-2}}\left(
\frac{(2m+1)!!(2n+1)!!}{(2b_1+1)!!}\langle\tau_m\tau_n\tau_{b_{L\setminus\{1\}}}\rangle_{g-1}^{r,(a,b,k_{L\setminus\{1\}})}\right.\\\nonumber
&\left.+\sum_{\substack{g_1+g_2=g\\I\coprod
J=L\setminus\{1\}}}^{stable}\sum_{\substack{a+|k_I|\equiv0\\b+|k_J|\equiv
0}}\frac{(2m+1)!!(2n+1)!!}{(2b_1+1)!!}\langle\tau_m\tau_{b_I}
\rangle_{g_1}^{r,(a,k_I)}
\langle\tau_n\tau_{b_J}\rangle_{g_2}^{r,(b,k_J)} \right).
\end{align}
\end{theorem}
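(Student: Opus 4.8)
The plan is to follow the strategy that produces the ordinary DVV recursion from simple Hurwitz numbers \cite{CLL,MZ}, now adapted to the orbifold setting. The starting point is the Laplace transform of the cut-and-join equation for orbifold Hurwitz numbers established in \cite{BSLM}. That formula expresses a Laplace-transformed generating function (built from the orbifold Hurwitz numbers $H_{g,\mu}$, with the $\mathbb{Z}_r$-twisting data recorded by the residues $k_i$) in terms of lower-complexity contributions, organized exactly into a ``join'' term (a single sum over $j$, merging the two parts indexed by $b_1$ and $b_j$) and two ``cut'' terms (splitting the part $b_1$ into $m$ and $n$, either keeping the cover connected while dropping the genus by one, or disconnecting it into stable pieces of genera $g_1+g_2=g$ over a partition $I\coprod J$ of the remaining labels). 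I would first rewrite this Laplace-transformed equation in the variables natural to the asymptotic expansion, so that each side becomes a series whose coefficients encode the targeted integrals, and so that the part indexed by $1$ is singled out as the one the cut-and-join operator acts on.

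Next I would substitute the ELSV-type formula of Johnson--Pandharipande--Tseng \cite{JPT}, which writes each orbifold Hurwitz number as a Hurwitz--Hodge integral $\langle\tau_{b_L}\rangle_g^{r,k_L}$ against the appropriate Hodge classes, divided by the automorphism and ramification factors. After this substitution both sides of the Laplace-transformed cut-and-join equation become \emph{polynomials} in the Laplace variables, whose coefficients are precisely the Hurwitz--Hodge integrals. The double factorials $(2b_1+1)!!$, $(2b_j-1)!!$, $(2m+1)!!$, $(2n+1)!!$, together with the combinatorial factor $C^{k_1,k_j}_{b_1,b_j}$, arise from the Laplace transform of the monomials in the ELSV integrand combined with the orbifold weights carried by the twisted sectors; pinning down their exact shape is a bookkeeping computation with Laplace kernels of $x^{b}e^{-x}$-type on the relevant strata.

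The recursion then follows by comparing the coefficients of the \emph{highest-order} terms on both sides of the resulting polynomial identity, exactly as in \cite{CLL,MZ}. The top-degree part isolates the single correlator $\langle\tau_{b_L}\rangle_g^{r,k_L}$ on the left and the three structured sums on the right, while all lower-order terms match automatically and encode the analogue of the $\lambda_g$-type identities of \cite{GJV,Zhu1}, which do not enter the top-degree comparison. The congruence constraints $a+|k_I|\equiv 0$ and $b+|k_J|\equiv 0 \pmod r$, and the merged sector index $r\langle\frac{k_1+k_j}{r}\rangle$, are forced by the requirement that every orbifold Hurwitz number that appears be nonzero, i.e.\ that the total twisting in each connected piece be trivial modulo $r$.

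The main obstacle I anticipate is \emph{not} the top-degree extraction, which is formally the same as in the classical case, but rather the orbifold bookkeeping: keeping the $\mathbb{Z}_r$-residues $k_i$ consistent through the join and cut operations, verifying that merging two sectors $k_1,k_j$ produces the residue $r\langle\frac{k_1+k_j}{r}\rangle$, and matching the orbifold normalization so that $C^{k_1,k_j}_{b_1,b_j}$ and the powers of $r$ (the overall factor $r$ in the join term and $\frac{r^2}{2}$ in the cut terms) come out exactly as stated. I would verify these by specializing to $r=1$, where the identity must reduce to the usual DVV recursion, and then tracking the $r$-dependence of the Laplace kernel sector by sector.
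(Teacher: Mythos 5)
Your proposal follows essentially the same route as the paper: take the Laplace-transformed cut-and-join equation of Bouchard--Serrano--Liu--Mulase (which, after the Johnson--Pandharipande--Tseng ELSV substitution, is already an identity among generating functions of Hurwitz--Hodge integrals), rewrite it in a convenient variable, and extract the top-degree coefficients, with the double factorials coming from the leading coefficients $c^k_{m,m}=(2m-1)!!\,r^m$ of the auxiliary functions $\xi_m^{r,k}$ and the congruences forced by nonvanishing of the twisted sectors. The strategy, the role of $C^{k_1,k_j}_{b_1,b_j}$ as a kernel coefficient left implicit in general, and the $r=1$ consistency check all match the paper's argument.
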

Where the constant $C^{k_1,k_j}_{b_1,b_j}$ can be calculated from
the formula (37) in Lemma 3.2.  We have not written down an explicit
formula for it in general. However, when $r=1$, then $k_i=0$, for
$i=1,..,l$.  It is easy to compute
\begin{align}
C^{0,0}_{b_1,b_j}=(2b_1+2b_j-1)!!.
\end{align}
Therefore, we have
\begin{corollary}
When $r=1$, the orbifold-DVV recursion is reduced to the ordinary
DVV recursion which is equivalent to the original Witten conjecture.
\begin{align}
&\langle\tau_{b_{L}}\rangle_g=\sum_{j=2}^l\frac{(2b_1+2b_j-1)!!}{(2b_1+1)!!(2b_j-1)!!}\langle\tau_{b_1+b_j-1}\tau_{b_{L\setminus\{1,j\}}}
\rangle_g\\\nonumber &+\frac{1}{2}\sum_{\substack{m+n=b_1-2\\m\geq
0,n\geq 0}}\left(
\frac{(2m+1)!!(2n+1)!!}{(2b_1+1)!!}\langle\tau_m\tau_n\tau_{b_{L\setminus\{1\}}}\rangle_{g-1}\right.\\\nonumber
&\left.+\sum_{\substack{g_1+g_2=g\\I\coprod
J=L\setminus\{1\}}}^{stable}\frac{(2m+1)!!(2n+1)!!}{(2b_1+1)!!}\langle\tau_m\tau_{b_I}
\rangle_{g_1} \langle\tau_n\tau_{b_J}\rangle_{g_2} \right).
\end{align}
\end{corollary}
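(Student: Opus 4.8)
The plan is to specialize the orbifold-DVV recursion of Theorem~1.1 to $r=1$ and to verify that each feature distinguishing the orbifold recursion from the classical one degenerates. First I would note that the hypothesis $0\le k_i\le r-1$ forces $k_i=0$ for all $i$ once $r=1$, so the entire twisting datum is trivial. Hence every Hurwitz-Hodge integral $\langle\tau_{b_L}\rangle_g^{r,k_L}$ occurring in the recursion reduces to the ordinary descendant integral $\langle\tau_{b_L}\rangle_g$, and the superscript $(r,k_L)$ may be dropped throughout.

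Next I would simplify the three terms in turn. In the genus-preserving term, $r=1$ turns the prefactor $r$ into $1$, while the shifted twisting index $r\langle\frac{k_1+k_j}{r}\rangle=1\cdot\langle 0\rangle=0$ again contributes only trivial data; substituting the value $C^{0,0}_{b_1,b_j}=(2b_1+2b_j-1)!!$ recorded in equation (2) then reproduces exactly the first sum of the asserted formula. In the second and third terms $r^2=1$, and the constraint $a+b=k_1$ collapses to the single choice $a=b=0$, since $k_1=0$ and $a,b\ge 0$. In the third term the congruence conditions $a+|k_I|\equiv 0$ and $b+|k_J|\equiv 0 \pmod r$ hold automatically modulo $1$ and so impose nothing, while the stability-constrained sum over $g_1+g_2=g$ and $I\coprod J=L\setminus\{1\}$ is unchanged. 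After these reductions the two terms coincide with the remaining sums of the Witten--Kontsevich/DVV recursion, with the condition $m+n=b_1-2$, $m,n\ge 0$ already matching.

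The only step calling for computation rather than mere substitution is the verification of equation (2), namely that the general constant $C^{0,0}_{b_1,b_j}$ determined by formula (37) of Lemma~3.2 equals the double factorial $(2b_1+2b_j-1)!!$. I expect this to be the sole genuine obstacle: one unwinds the definition of $C^{k_1,k_j}_{b_1,b_j}$ at $k_1=k_j=0$ and identifies the resulting combinatorial expression with $(2b_1+2b_j-1)!!$. Granting this evaluation, the corollary follows by matching the specialized recursion against the classical DVV recursion term by term.
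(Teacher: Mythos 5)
Your proposal is correct and follows essentially the same route as the paper, which likewise obtains the corollary by direct specialization of Theorem~1.1 at $r=1$: the bound $0\le k_i\le r-1$ forces all $k_i=0$, the congruences modulo $1$ and the sum over $a+b=k_1$ become vacuous, and the constant $C^{0,0}_{b_1,b_j}=(2b_1+2b_j-1)!!$ is substituted from equation~(2). You rightly flag the evaluation of $C^{0,0}_{b_1,b_j}$ from formula~(37) as the only genuine computation; the paper likewise asserts it without detail ("it is easy to compute").
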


The rank of the Hodge bundle $\mathbb{E}^U$ (see Section 2 for the
detail definition) on
$\overline{\mathcal{M}}_{g,\gamma}(B\mathbb{Z}_r)$ depends on the
monodories at the marking points. When all the monodories are
trivial, $rk \mathbb{E}^U=g$. Otherwise, the rank is given by
formula (19). By looking at the lowest terms in the formula (36), we
obtain the following theorem.

\begin{theorem}
When all the monodromies at the marking points are trivial, i.e.
$k_i=0$, for all $1\leq i\leq l$. If $b_i\geq 0$ for $1\leq i\leq
l$, and $\sum_{i=1}^lb_i=2g-3+l$, we have the following closed
formula for $\lambda_g^{U}$-integrals:
\begin{align}
\langle\tau_{b_L}\lambda_g^{U}\rangle_g^{r}=\binom{2g-3+l}{b_1,...,b_l}\langle\tau_{2g-2}\lambda_{g}^{U}\rangle_g^{r}
\end{align}
where the one-point integral
$\langle\tau_{2g-2}\lambda_{g}^{U}\rangle_g^{r}$ is determined by
the following formula given in \cite{JPT}(See the formula before
Section 4):
\begin{align}
\frac{1}{r}+\sum_{g>0}t^{2g}\langle\tau_{2g-2}\lambda_g^U\rangle_g^r=\frac{1}{r}\frac{t/2}{\sin(t/2)}.
\end{align}

Otherwise, the $\lambda^{U}_{rk\mathbb{E}^U}$-integral satisfies the
following identity:
\begin{align}
    &|Z| \langle
    \tau_{b_L}\lambda^U_{rk\mathbb{E}^U}\rangle_g^{r,k_L}=\left(\sum_{i\in Z}\sum_{j\in k_{L\setminus Z}}
    \frac{c_{b_i+b_j,0}^{k_j}}{c_{b_i,0}^0c_{b_j,0}^{k_j}}\langle
\tau_{b_i+b_j-1}\tau_{b_{L\setminus\{i,j\}}}\lambda_{rk
\mathbb{E}^U}^U
\rangle_g^{r,(k_j,k_{L\setminus\{i,j\}})}\right.\\\nonumber
&\left.+\sum_{i,j\in Z,i<
j}\frac{c_{b_i+b_j,0}^0}{c_{b_i,0}^0c_{b_j,0}^0}\langle\tau_{b_i+b_j-1}\tau_{b_{L\setminus\{i,j\}}}\lambda_{rk\mathbb{E}^U}^U
\rangle_g^{r,(0,k_{L\setminus\{i,j\}})} \right)\\\nonumber
&-\frac{r^2}{2}\sum_{i\in
Z}\left[\sum_{\substack{a+b=r\\m+n=b_i-2}}\langle\tau_m\tau_n\tau_{b_{L\setminus\{i\}}}\lambda^U_{rk
\mathbb{E}^U}
\rangle_{g-1}^{r,(a,b,k_{L\setminus\{i\}})}\frac{c_{m+1,0}^ac_{n+1,0}^b}{c_{b_i,0}^0}\right.\\\nonumber
&\left.+\sum_{\substack{g_1+g_2=g\\
I\coprod J=L\setminus\{i\}}}^{stable}\sum_{\substack{a+|k_{I}|\equiv
0\\b+|k_J|\equiv 0}}\langle \tau_m \tau_{b_I}\lambda^U_{rk
\mathbb{E}^U}\rangle_{g_{1}}^{r,(a,k_I)} \langle
\tau_n\tau_{b_J}\lambda^U_{rk
\mathbb{E}^U}\rangle_{g_2}^{r,(b,k_J)}\frac{c_{m+1,0}^ac_{n+1,0}^b}{c_{b_i,0}^0}\right]
\end{align}
where $Z\subset L$ is the index set defined as $Z=\{i|k_i=0, i\in
L\}$. In particular, when $Z$ is an empty set, both sides of the
identity (6) are zero. Moreover, the constants $c_{m,0}^k$ are given
in the formula (31): $c_{m,0}^k=\prod_{j=1}^m(k-jr)$.

\end{theorem}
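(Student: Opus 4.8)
The plan is to extract the lowest-order terms of the master polynomial identity (36), exactly dual to the way Theorem 1 arose from its highest-order terms. Recall that (36) is produced by feeding the JPT ELSV-type formula into the cut-and-join equation and then applying the Laplace transform; both sides are polynomials in the transform variable, so the identity holds order by order. After expanding each Hurwitz-Hodge integral along the total Chern class $c(\mathbb{E}^U)=1+\lambda_1^U+\cdots+\lambda_{rk\mathbb{E}^U}^U$, the monomials become graded by Hodge degree: the top-order terms carry $\lambda_0^U=1$ (the pure $\psi$-integrals of Theorem 1), while the lowest-order terms carry the top class $\lambda_{rk\mathbb{E}^U}^U$.

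First I would isolate the coefficient of this lowest order. The numerical factors that appear are precisely the constants $c_{m,0}^k=\prod_{j=1}^m(k-jr)$ of (31), which enter as the leading Chern-class coefficients in Lemma 3.2, formula (37). Reading off this coefficient gives identity (6) directly, with its three groups of terms (the mixed and unmixed pair-contraction sums, and the genus-reducing degeneration sum) inherited from the corresponding three types of terms in (36). When $Z=\emptyset$ there is nothing to prove: the prefactor $|Z|$ vanishes and every sum in (6) ranges over the empty index set, so both sides are identically zero.

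Next I would specialize to the all-trivial monodromy case, where $rk\mathbb{E}^U=g$ and the top class is $\lambda_g^U$. Here $Z=L$, so the mixed sum drops out and the pair-contraction constants collapse to binomials, $c_{b_i+b_j,0}^0/(c_{b_i,0}^0 c_{b_j,0}^0)=\binom{b_i+b_j}{b_i}$, using $c_{m,0}^0=(-r)^m m!$. The dimension constraint $\sum_i b_i=2g-3+l$ is precisely the condition that $\langle\tau_{b_L}\lambda_g^U\rangle_g^r$ be of top degree on $\overline{\mathcal{M}}_{g,\gamma}(B\mathbb{Z}_r)$. I would then solve the extracted identity by induction on the pair $(g,l)$: the one-point base case is supplied by the sine-formula (5) of JPT, the separating degeneration terms feed back lower-genus all-trivial values (again multinomial by the inductive hypothesis, since the node monodromies there are forced to be trivial, $a\equiv b\equiv 0$), while the non-separating term—whose two half-nodes carry conjugate non-trivial monodromies $a,r-a$—is governed by (6). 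The combinatorial claim is that these contributions assemble into a single multinomial weight $\binom{2g-3+l}{b_1,\dots,b_l}$ times the one-point integral, giving (4).

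The main obstacle is showing that, in the trivial case, all the degeneration contributions collapse onto this single multiple of the one-point integral, leaving no residual lower-genus or mixed-monodromy dependence. This is the orbifold counterpart of the delicate cancellations behind the ordinary $\lambda_g$-formula, and the genuinely new feature is the non-separating term: here the conjugate monodromies $a,r-a$ make the rank of $\mathbb{E}^U$ on the boundary stratum jump back up by formula (19), so that, in contrast to the Faber-Pandharipande situation where such a term vanishes outright, it is genuinely present and must be evaluated through (6). A secondary but essential point is tracking the constants $c_{m,0}^k$ cleanly through the Laplace transform, so that the lowest-order coefficient of (36) reproduces (6) on the nose.
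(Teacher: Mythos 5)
Your overall strategy coincides with the paper's: extract the lowest-degree terms in $t$ from identity (36), dual to the highest-degree extraction that produced Theorem 1.1. Your outline of how (6) emerges --- the constants $c^{k}_{m,0}$ coming from the bottom coefficient of $\xi^{r,k}_m(t)$, the three groups of terms, the triviality when $Z=\emptyset$ --- matches the paper, as does the identity $c^{0}_{b_i+b_j,0}/(c^{0}_{b_i,0}c^{0}_{b_j,0})=\binom{b_i+b_j}{b_i}$ from $c^0_{m,0}=(-r)^m m!$.

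The gap is in your treatment of the trivial-monodromy formula (4). You assert that the degeneration terms are ``genuinely present'' --- in particular that the non-separating term with conjugate monodromies $a,\,r-a$ ``must be evaluated through (6)'' --- and you name the assembly of these contributions into a single multinomial as the main obstacle, which you then leave unresolved. In fact those terms vanish identically at the lowest order, and this vanishing is precisely what makes the induction close. Concretely: when all $k_i=0$ one has $rk\,\mathbb{E}^U_{g,k_L}=g$ and the coefficient extraction forces $|b_L|=2g-3+l$; for the genus-reducing boundary term with $a+b=r$, $a,b>0$, formula (19) gives $rk\,\mathbb{E}^U_{g-1,(a,b,0,\dots,0)}=g-1$ on a moduli space of dimension $3g-5+l$, so a nonzero top-$\lambda$ integral requires $m+n+|b_{L\setminus\{i\}}|\geq 2g-4+l$, i.e.\ $m+n\geq b_i-1$, while matching the monomial $t_i^{(b_i+1)r}$ requires $m+n=b_i-2$ (and $m+n=b_i-3$ when $a=b=0$); the separating terms fail by the same count. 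Hence only the pair-joining term survives, yielding
\begin{align*}
\langle\tau_{b_L}\lambda^U_{g}\rangle_g^{r}=\frac{1}{l-1}\sum_{1\leq i< j\leq l}\binom{b_i+b_j}{b_i}\langle\tau_{b_i+b_j-1}\tau_{b_{L\setminus\{i,j\}}}\lambda_g^U\rangle_g^{r},
\end{align*}
and a short induction on $l$ (using $\sum_{i<j}(b_i+b_j)=(l-1)(2g-3+l)$) gives the multinomial formula, with the one-point base case quoted from the sine formula of \cite{JPT}. There are no delicate cancellations to establish; the ``main obstacle'' you describe dissolves once the rank bookkeeping is done, but as written your argument for (4) is incomplete at exactly this point.
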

As a direct application of the formula (4), we obtain
\begin{corollary} When $b_i\geq 0$ for $1\leq i\leq
l$, and $\sum_{i=1}^lb_i=2g-3+l$,
\begin{align}
\langle\tau_{b_L}\lambda_g^{U}\rangle_g^{r}=\frac{1}{r}\langle\tau_{b_L}\lambda_g\rangle_g
\end{align}
where the right hand side is the ordinary Hodge integral on moduli
space of curves:
\begin{align}
\langle\tau_{b_L}\lambda_g\rangle_g=\int_{\overline{\mathcal{M}}_{g,l}}\psi_{b_1}\cdots\psi_{b_l}\lambda_g.
\end{align}
\end{corollary}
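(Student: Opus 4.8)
The plan is to reduce both sides of the claimed identity to one-point integrals by invoking the closed formula (4), and then to compare those one-point integrals through the generating series (5). Concretely, I would first apply (4) at the given level $r$ to obtain
\begin{align*}
\langle\tau_{b_L}\lambda_g^{U}\rangle_g^{r}=\binom{2g-3+l}{b_1,\dots,b_l}\langle\tau_{2g-2}\lambda_{g}^{U}\rangle_g^{r},
\end{align*}
and then apply the same formula at $r=1$. In the latter case all monodromies are necessarily trivial, the twisted Hodge bundle $\mathbb{E}^{U}$ reduces to the ordinary Hodge bundle so that $\lambda_g^{U}=\lambda_g$, and (4) becomes the classical closed formula
\begin{align*}
\langle\tau_{b_L}\lambda_g\rangle_g=\binom{2g-3+l}{b_1,\dots,b_l}\langle\tau_{2g-2}\lambda_{g}\rangle_g.
\end{align*}
In both relations the multinomial coefficient is the same, so it remains only to compare the two one-point integrals $\langle\tau_{2g-2}\lambda_{g}^{U}\rangle_g^{r}$ and $\langle\tau_{2g-2}\lambda_{g}\rangle_g$.

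The key step is to extract this comparison from (5). Writing (5) at the given $r$ and subtracting the constant term gives $\sum_{g>0}t^{2g}\langle\tau_{2g-2}\lambda_g^{U}\rangle_g^{r}=\tfrac{1}{r}\bigl(\tfrac{t/2}{\sin(t/2)}-1\bigr)$, while specializing (5) to $r=1$ yields $\sum_{g>0}t^{2g}\langle\tau_{2g-2}\lambda_g\rangle_g=\tfrac{t/2}{\sin(t/2)}-1$. Comparing coefficients of $t^{2g}$ in these two expansions immediately gives $\langle\tau_{2g-2}\lambda_g^{U}\rangle_g^{r}=\tfrac{1}{r}\langle\tau_{2g-2}\lambda_g\rangle_g$ for every $g>0$.

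Substituting this into the first displayed relation and then using the $r=1$ relation to rewrite the product of the multinomial coefficient with $\langle\tau_{2g-2}\lambda_g\rangle_g$ as $\langle\tau_{b_L}\lambda_g\rangle_g$, I would obtain
\begin{align*}
\langle\tau_{b_L}\lambda_g^{U}\rangle_g^{r}=\frac{1}{r}\binom{2g-3+l}{b_1,\dots,b_l}\langle\tau_{2g-2}\lambda_{g}\rangle_g=\frac{1}{r}\langle\tau_{b_L}\lambda_g\rangle_g,
\end{align*}
which is precisely the asserted identity (7).

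Since every ingredient here, namely formula (4), its $r=1$ specialization, and the one-point series (5), is already established in the excerpt, the argument is essentially formal. The only point requiring genuine care, and where I would concentrate the attention, is the $r=1$ specialization: one must check that the trivial-monodromy hypothesis ($k_i=0$) of the preceding theorem is automatically satisfied when $r=1$, that $\lambda_g^{U}$ indeed coincides with the ordinary $\lambda_g$ in that case so that $rk\,\mathbb{E}^{U}=g$, and that the dimension constraint $\sum_{i=1}^{l}b_i=2g-3+l$ is preserved so that the same closed formula applies verbatim at $r=1$.
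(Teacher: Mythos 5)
Your argument is correct and follows essentially the same route as the paper: both reduce the identity to the one-point integrals via the closed multinomial formula (4) and extract the factor $\tfrac{1}{r}$ by comparing the generating series (5) with its classical counterpart. The only (harmless) difference is that you obtain the classical closed formula and one-point series as the $r=1$ specialization of (4) and (5), whereas the paper quotes them directly from Faber--Pandharipande as formulas (10) and (11).
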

\begin{proof}
The ordinary $\lambda_g$-integral  satisfies the following formula
which was firstly obtained by Faber-Pandharipande \cite{FP}
\begin{align}
\langle\tau_{b_L}\lambda_g\rangle_g=\binom{2g-3+l}{b_1,..,b_l}\langle
\tau_{2g-2}\lambda_g\rangle_g.
\end{align}
where the one-point integral $\langle \tau_{2g-2}\lambda_g\rangle_g$
is determined by
\begin{align}
1+\sum_{g>0}t^{2g}\langle\tau_{2g-2}\lambda_g\rangle_g=\frac{t/2}{\sin(t/2)}.
\end{align}
Hence, by formula (5),
\begin{align}
\langle \tau_{2g-2}\lambda_g^U\rangle_g^r=\frac{1}{r}\langle
\tau_{2g-2}\lambda_g\rangle_g.
\end{align}
Therefore, Corollary 1.4 is derived from the formula (4).
\end{proof}

\begin{remark}
The formula (7) in Corollary 1.4 was first conjectured by J. Zhou
\cite{Zhou3}. We would like to thank Prof. Kefeng Liu and Hao Xu for
pointing out it to us.
\end{remark}

\section{Preliminaries}
\subsection{Hurwitz-Hodge integrals}
Let $\overline{\mathcal{M}}_{g,\gamma}(B\mathbb{Z}_r)$ be the moduli
space of stable maps to the classifying space $B\mathbb{Z}_r$ where
$\gamma=(\gamma_1,...,\gamma_n)$ is a vector elements in
$\mathbb{Z}_r$. In particular, when $a=1$, the moduli space of
stable maps $\overline{\mathcal{M}}_{g,(0,..,0)}(B\mathbb{Z}_r)$ is
specialized to $\overline{\mathcal{M}}_{g,n}$. Let $U$ be the
irreducible $\mathbb{C}$-representation of $\mathbb{Z}_r$ given by
\begin{align}
\phi^{U}: \mathbb{Z}_r\rightarrow \mathbb{C}^*, \quad
\phi^{U}(1)=\exp(2\pi\sqrt{-1}/r).
\end{align}
We have the corresponding Hodge bundle
$\mathbb{E}^U_{g,\gamma}\rightarrow
\overline{\mathcal{M}}_{g,\gamma}(B\mathbb{Z}_r)$ and the Hodge
classes $\lambda_{i}^{U,g,\gamma}=c_i(\mathbb{E}^{U}_{g,\gamma})$.
More generally, for any irreducible representation $R$ of
$\mathbb{Z}_r$, we denote the corresponding Hodge bundle and Hodge
classes as $\mathbb{E}^R_{g,\gamma}$ and $\lambda_i^{R,g,\gamma}$
respectively. In the following, for brevity, we will also use the
notations $\mathbb{E}^R$ and $\lambda_i^{R}$ to denote the Hodge
bundle and Hodge classes without confusion.

The $i$-th cotangent line bundle $L_i$ on the moduli space of curves
has fiber $L_{i}|_{(C,p_1,...,p_n)}=T_{p_i}^*C$. The $\psi$-classes
on $\overline{\mathcal{M}}_{g,n}$ are defined by
\begin{align}
\psi_i=c_1(L_i)\in H^{2}(\overline{\mathcal{M}}_{g,n},\mathbb{Q}).
\end{align}
The $\psi$-classes on
$\overline{\mathcal{M}}_{g,\gamma}(B\mathbb{Z}_r)$ are defined by
pull-back via the morphism
\begin{align}
\epsilon:
\overline{\mathcal{M}}_{g,\gamma}(B\mathbb{Z}_r)\rightarrow
\overline{\mathcal{M}}_{g,n}
\end{align}
as
\begin{align}
\overline{\psi}_i=\epsilon^*(\psi_i)\in H^{2}(
\overline{\mathcal{M}}_{g,\gamma}(B\mathbb{Z}_r),\mathbb{Q}).
\end{align}

Hurwitz-Hodge integrals over
$\overline{\mathcal{M}}_{g,\gamma}(B\mathbb{Z}_r)$ are the top
intersection products of the classes $\{\lambda_i^{R}\}$ and
$\{\overline{\psi}_j\}_{1\leq j\leq n}$:
\begin{align}
\int_{\overline{\mathcal{M}}_{g,\gamma}(B\mathbb{Z}_r)}\overline{\psi}_1^{b_1}\cdots
\overline{\psi}_n^{b_n}(\lambda_1^{R})^{k_1}\cdots (\lambda_{rk
\mathbb{E}^R}^R)^{k_{rk \mathbb{E}^U}}
\end{align}

We let
\begin{align}
\Lambda^{R}(t)=\sum_{j\geq 0}^{rk \mathbb{E}^R}(-t)^j\lambda_j^{R}.
\end{align}
where $rk \mathbb{E}^R$ is the rank of $\mathbb{E}^{R}$ determined
by the orbifold Riemann-Roch formula.

\subsection{Orbifold ELSV formula}
In \cite{JPT}, Johnson-Pandharipande-Tseng established the following
ELSV-type formula for orbifold Hurwtiz number
$H^{r}_{g,l}(\mu_1,...,\mu_l)$.
\begin{theorem}
The orbifold Hurwitz number has an expression in terms of linear
Hurwtiz-Hodge integrals as follows:
\begin{align}
H^{r}_{g,l}(\mu)=r^{1-g+\sum_{i=1}^l\langle\frac{\mu_i}{r}\rangle}\prod_{i=1}^l\frac{\mu_i^{\lfloor\frac{\mu_i}{r}\rfloor}}
{\lfloor\frac{\mu_i}{r}\rfloor!}
\int_{\overline{\mathcal{M}}_{g,-\mu}(B\mathbb{Z}_r)}
\frac{\Lambda}{\prod_{i=1}^l(1-\mu_i\overline{\psi}_i)}.
\end{align}
where $\Lambda=\Lambda^{U}(r)=\sum_{j\geq
0}^{rk\mathbb{E}^U}(-r)^j\lambda^{U}_j$, $\mu=(\mu_1,..,\mu_l)$ is a
partition with length $l$. The floor and fractional part of a $q\in
\mathbb{Q}$ is denoted by $q=\lfloor q \rfloor+\langle q\rangle$.
\end{theorem}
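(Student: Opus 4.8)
The plan is to deduce the formula from the virtual localization theorem applied to a moduli space of twisted stable maps to a stacky target, exactly parallel to the localization proof of the classical ELSV formula (Graber--Vakil, \emph{Hodge integrals and Hurwitz numbers via virtual localization}), but carried out on an orbifold $\mathbb{P}^1$. First I would realize the orbifold Hurwitz number $H^r_{g,l}(\mu)$ as the degree of a virtual class on a moduli space of stable maps. Let $\mathcal{P}$ denote $\mathbb{P}^1$ equipped with a single $B\mathbb{Z}_r$ orbifold structure at $\infty$, and consider the moduli space of $\mathbb{Z}_r$-twisted stable maps $f:\mathcal{C}\to\mathcal{P}$ of genus $g$ and degree $d=\sum_i\mu_i$, with monodromy $-\mu_i \bmod r$ at the preimages of the stacky point and the appropriate ramification profile $\mu$ over $0$. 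The enumerative definition of the orbifold Hurwitz number identifies $H^r_{g,l}(\mu)$ with an integral against the virtual fundamental class of this space.

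Second, I would introduce the $\mathbb{C}^*$-action on $\mathcal{P}$ fixing $0$ and $\infty$, lift it to the moduli space, and apply the virtual localization formula in its orbifold form. The fixed loci are indexed by localization graphs. The dominant fixed locus is the one in which the entire genus and all the orbifold marked points are carried by a single contracted component mapping to the stacky point, whose moduli space is precisely $\overline{\mathcal{M}}_{g,-\mu}(B\mathbb{Z}_r)$, attached to $l$ ``legs'' that are rational multiple covers of $\mathbb{P}^1$ totally ramified over the two torus-fixed points, of degrees $\mu_1,\dots,\mu_l$.

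Third, I would compute the two types of contribution. Each leg is a standard rational multiple-cover calculation: after summing the geometric and automorphism weights, the degree-$\mu_i$ leg produces the combinatorial prefactor assembling into $r^{1-g+\sum_i\langle \mu_i/r\rangle}\prod_i \mu_i^{\lfloor \mu_i/r\rfloor}/\lfloor \mu_i/r\rfloor!$, while inverting the equivariant Euler class of the node-smoothing direction resums the $\psi$-class insertions into $1/(1-\mu_i\overline{\psi}_i)$. The vertex over the orbifold point contributes the equivariant Euler class of the obstruction bundle, whose $H^1$-part is the dual twisted Hodge bundle; evaluating at the distinguished equivariant parameter, which equals $r$ because of the weight of the $\mathbb{C}^*$-action at the stacky point, produces exactly $\Lambda=\Lambda^U(r)=\sum_j(-r)^j\lambda_j^U$. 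Assembling vertex and edge contributions, and checking that the remaining graphs contribute nothing in the relevant cohomological degree, yields the stated identity.

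The hard part will be the precise bookkeeping in the twisted setting. One must identify the correct monodromy data $\gamma_i=-\mu_i\bmod r$ at each marked point, compute the automorphism and gerbe factors that account for the overall power of $r$ and the fractional-part exponents $\langle\mu_i/r\rangle$, and determine the rank $rk\mathbb{E}^U$ of the twisted Hodge bundle through the orbifold Riemann--Roch (Chen--Ruan) formula. Most delicate is verifying that the linearization of the obstruction theory at the twisted vertex yields $\Lambda^U(r)$, built from the representation $U$ and evaluated at $r$, rather than the untwisted $\Lambda$; it is exactly this replacement, together with the fractional-part normalization of the degrees against the gerbe automorphisms, that distinguishes the orbifold formula from the classical ELSV formula.
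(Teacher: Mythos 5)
The paper does not prove this statement: it is Theorem~2.1, imported verbatim from Johnson--Pandharipande--Tseng \cite{JPT}, and is used here only as an input (together with the cut-and-join equation of \cite{BSLM}) to derive the Hurwitz--Hodge identities of Section~1. So there is no internal proof to compare yours against. Your outline does reproduce the strategy of the original JPT argument, which is a twisted-stable-maps analogue of the Graber--Vakil localization proof of the classical ELSV formula: localize on a moduli space of (relative) stable maps to an orbifold $\mathbb{P}^1$, identify the distinguished fixed locus with $\overline{\mathcal{M}}_{g,-\mu}(B\mathbb{Z}_r)$ decorated by $l$ rational legs, and read off the edge factors $\mu_i^{\lfloor \mu_i/r\rfloor}/\lfloor \mu_i/r\rfloor!$ and the vertex factor $\Lambda^U(r)$. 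As a proof, however, what you have written is a plan: the two items you explicitly defer --- showing that the remaining localization graphs contribute nothing (which in ELSV-type arguments is not a degree count but requires a specific equivariant lift of the branch-divisor class, or a separate vanishing/cancellation argument) and identifying the $H^1$-part of the obstruction theory at the twisted vertex with the dual of $\mathbb{E}^U$ carrying torus weight $r$, together with the gerbe and automorphism factors producing the exponent $1-g+\sum_i\langle\mu_i/r\rangle$ --- are exactly the substance of the theorem. If the result is meant to be cited rather than reproved, your sketch is consistent with the cited source; if it is meant to be proved, the bookkeeping you postpone is where the entire difficulty lies.
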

The rank of the Hodge bundle $\mathbb{E}^U$ can be calculated by the
orbifold Riemann-Roch formula. When all the monodromies are trivial,
i.e. $\langle\frac{\mu_i}{r}\rangle=0$, for all $i=1,..,l$, the rank
of $\mathbb{E}^U_{g,\mu}$ is $g$, otherwise, the rank is (see
formula (3.17) in \cite{BSLM}) given by
\begin{align}
rk\mathbb{E}^U_{g,\mu}=g-1+\sum_{i=1}^l\langle
-\frac{\mu_i}{r}\rangle.
\end{align}

\subsection{The Laplace transform of the cut-and-join equation for orbifold Hurwitz numbers}
The orbifold Hurwitz number $H^{(r)}_{g,l}(\mu)$ also satisfies the
cut-and-join equation, the following formula was established in
Bouchard-Serrano-Liu-Mulase \cite{BSLM}.
\begin{theorem}(Cut-and-join equation \cite{BSLM})
\begin{align}
&sH^{(r)}_{g,l}(\mu)=\frac{1}{2}\sum_{i \neq
j}(\mu_i+\mu_j)H^{(r)}_{g,l-1}(\mu_i+\mu_j,\mu(\hat{i},\hat{j}))\\\nonumber
&=\frac{1}{2}\sum_{i=1}^{l}\sum_{\alpha+\beta=\mu_i}\alpha\beta
\left[H^{(r)}_{g-1,l-1}(\alpha,\beta,\mu(\hat{i}))+\sum_{\substack{g_1+g_2=g\\I\coprod
J=L\setminus\{i\}}}H_{g_1,|I|+1}^{(r)}(\alpha,\mu_I)H_{g_2,|J|+1}^{(r)}(\beta,\mu_J)\right]
\end{align}
where
\begin{align}
s=2g-2+l+\sum_{i=1}^{l}\frac{\mu_i}{r}
\end{align}
is the number of the simple ramification point given by the
Riemann-Hurwitz formula. The notation $\hat{i}$ indicates that the
parts $\mu_i$ is erased.
\end{theorem}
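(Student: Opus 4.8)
The plan is to prove the cut-and-join equation by the standard monodromy (symmetric-group) method, isolating the combinatorial effect of a single simple branch point. \emph{First}, I would pass to the monodromy description of orbifold Hurwitz covers. Writing $d=|\mu|=\sum_i\mu_i$, a connected orbifold Hurwitz cover counted by $H^{(r)}_{g,l}(\mu)$ corresponds, up to the usual $1/d!$ normalization and automorphism weighting, to a tuple $(\sigma_0;\tau_1,\dots,\tau_s;\sigma_\infty)$ in $S_d$ with $\sigma_0\tau_1\cdots\tau_s\sigma_\infty=\mathrm{id}$, where $\sigma_0^r=\mathrm{id}$ records the $\mathbb{Z}_r$-orbifold monodromy over $0$, each $\tau_k$ is a transposition (a simple branch point), $\sigma_\infty$ has cycle type $\mu$, the group they generate acts transitively, and $s$ is pinned down by the Riemann--Hurwitz formula (22). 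Thus $H^{(r)}_{g,l}(\mu)$ is a weighted enumeration of such tuples.

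\emph{Second}, I would single out one distinguished transposition, say $\tau_s$, and put $\sigma_\infty'=\tau_s\sigma_\infty$. The relation then reads $\sigma_0\tau_1\cdots\tau_{s-1}\sigma_\infty'=\mathrm{id}$, which is the monodromy datum of a cover with one fewer simple branch point; all the combinatorics is contained in how multiplication by $\tau_s$ changes the cycle type of $\sigma_\infty$. Since $\tau_s$ does not touch $\sigma_0$, the orbifold datum over $0$ is merely carried along as a spectator; this is exactly why the orbifold cut-and-join has the same shape as the classical one, the only imprint of $r$ being through the value of $s$.

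\emph{Third}, I would run the cut-and-join dichotomy on $\sigma_\infty$. If $\tau_s$ swaps elements lying in two distinct cycles of $\sigma_\infty$ of lengths $\mu_i,\mu_j$, those cycles merge into one cycle of length $\mu_i+\mu_j$ while the genus and connectivity are unchanged; summing over the transpositions realizing each merge and tracking the normalization yields the join coefficient $(\mu_i+\mu_j)$, hence the first term $\tfrac12\sum_{i\neq j}(\mu_i+\mu_j)H^{(r)}_{g,l-1}(\mu_i+\mu_j,\mu(\hat i,\hat j))$. If instead $\tau_s$ swaps two elements of a single cycle of length $\mu_i$, that cycle splits into cycles of lengths $\alpha,\beta$ with $\alpha+\beta=\mu_i$, producing the cut coefficient $\alpha\beta$; here the generated group either remains transitive, giving a connected cover of genus $g-1$ (the $g-1$ term), or acquires two orbits, giving a disconnected cover splitting as $g_1+g_2=g$ with the remaining parts distributed over $I\coprod J$ (the product term). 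Together these reproduce the second line. The prefactor $s$ arises because the $s$ transpositions play symmetric roles: counting pairs (factorization, choice of distinguished transposition) overcounts each cover exactly $s$-fold, which is what the left-hand side records.

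The step I expect to be the main obstacle is the exact bookkeeping of weights: propagating the $1/|\mathrm{Aut}(\mu)|$ and $1/d!$ factors through a merge or a split so that the clean coefficients $(\mu_i+\mu_j)$ and $\alpha\beta$ emerge, and---more delicately---correctly separating the connected genus-drop case from the disconnecting case, including the simultaneous partition of $\sigma_0$ and of the surviving simple branch points $\tau_1,\dots,\tau_{s-1}$ across the two components in the latter. An alternative that sidesteps some of this is to phrase the argument as the action of the central element $\sum_{a<b}(ab)\in Z(\mathbb{C}[S_d])$ and read off the cut-and-join operator on the generating series; but the honest combinatorial version above already makes the orbifold case transparent.
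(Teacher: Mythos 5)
This theorem is not proved in the paper at all: it is imported verbatim from Bouchard--Serrano--Liu--Mulase \cite{BSLM} (and ultimately goes back to the degeneration/monodromy arguments for Hurwitz numbers), so there is no in-paper proof to compare against. Your proposal is the standard transposition-removal argument, which is essentially how the result is established in the cited literature: fix the monodromy presentation, strip off one transposition, and run the cut/join dichotomy on $\sigma_\infty$, observing that the datum over the orbifold point $0$ is a spectator so that the only trace of $r$ is in the Riemann--Hurwitz count $s=2g-2+l+\sum_i\mu_i/r$. That is the right strategy, and your reading of the displayed formula as $sH=(\text{join})+(\text{cut})$ --- rather than the chain of equalities the typesetting literally asserts --- is the correct one; the second ``$=$'' in (20) should be a ``$+$''.

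Two points keep this from being a complete proof rather than a correct outline. First, the combinatorial model of $H^{(r)}_{g,l}(\mu)$ is more delicate than ``$\sigma_0^r=\mathrm{id}$'': the Johnson--Pandharipande--Tseng orbifold Hurwitz numbers count maps to the root stack $\mathbb{P}^1[r]$ that are orbifold-\'etale over $0$, which on coarse monodromy does force all cycles of $\sigma_0$ to have length dividing $r$, but the count also carries the twisting/decoration data at the stacky preimages of $0$ (equivalently, it is most cleanly phrased as a weighted count or a wreath-product $\mathbb{Z}_r\wr S_d$ count). Since the surgery never touches $\sigma_0$ or its decoration, this does not change the shape of the argument, but a complete proof must fix the definition precisely so that the weights in the disconnected term come out right --- in particular the decoration over $0$, like $\sigma_0$ itself and the remaining transpositions, must be distributed over the two components. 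Second, the bookkeeping you defer (propagating $1/d!$ and the automorphism factors through a merge or a split to land on the clean coefficients $\mu_i+\mu_j$ and $\alpha\beta$, and the $s$-fold overcount on the left) is standard but is exactly where the content of the identity lives; as written, the proposal identifies the correct mechanism without verifying it. I would call this a correct and appropriately targeted plan whose remaining steps are routine.
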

In order to describe the Laplace transform of the above cut-and-join
formula (20). We need to introduce the auxiliary functions showed in
\cite{BSLM} (see section 7.2) firstly:
 \makeatletter
\let\@@@alph\@alph
\def\@alph#1{\ifcase#1\or \or $'$\or $''$\fi}\makeatother
\begin{subnumcases}
{\xi_{-1}^{r,k}(\eta)=} \frac{1}{r}\eta^r, &$k=0$, \\\nonumber
\frac{1}{kr^{k/r}}\eta^k, &$0<k<r$.
\end{subnumcases}
\makeatletter\let\@alph\@@@alph\makeatother and for $m\geq -1$, we
let
\begin{align}
\xi_{m+1}^{r,k}(\eta)=\frac{\eta}{1-\eta^r}\frac{d}{d\eta}\xi_{m}^{r,k}(\eta).
\end{align}

For the following exposition, we also need to fix some notations.
let $L=(1,..,l)$ be an index set, we denote $b_L=(b_1,..,b_l)$ with
$b_i\geq 0$, $k_L=(k_1,..,k_l)$ with $0\leq k_i<r$,
$|k_L|=\sum_{i=1}^l k_i$ and $\tau_{b_L}=\prod_{i=1}^l\tau_{b_i}$,
$\xi_{b_L}^{r,k_L}(\eta_L)=\prod_{i=1}^{l}\xi_{b_i}^{r,k_i}(\eta_i)$.

The Hurwitz-Hodge integrals are abbreviated as
\begin{align}
\langle\tau_{b_L}\Lambda \rangle_g^{r,k_L}:=\langle
\tau_{b_1}\tau_{b_2}\cdots
\tau_{b_l}\Lambda\rangle_g^{r,k_L}=\int_{\overline{\mathcal{M}}_{g,-k_L}(B\mathbb{Z}_r)}\prod_{i=1}^l\psi_i^{b_i}\Lambda.
\end{align}

Now the Laplace transform of the cut-and-join equation can be
described as follows (see formula (7.26) in \cite{BSLM}).
\begin{align}
&\sum_{\substack{|k_L|\equiv0\\|b_L|\leq
3g-3+l}}r^{\frac{|k_L|}{r}}\langle\tau_{b_L}\Lambda
\rangle_g^{r,k_L}\left[(2g-2+l)\xi_{b_L}^{r,k_L}(\eta_{L})+\frac{1}{r}\sum_{i=1}^{l}(1-\eta_i^r)\xi_{b_i+1}^{r,k_i}(\eta_i)
\xi_{b_{L\setminus\{i\}}}^{r,k_{L\setminus\{i\}}}(\eta_{L\setminus\{i\}})\right]\\\nonumber
&=\sum_{1\leq i<j\leq
l}\sum_{\substack{a+|k_{L\setminus\{i,j\}}|\equiv 0\\m+b_{L\setminus
\{i,j\}\leq 3g-4+l}}}r^{\frac{a+|k_{L\setminus\{i,j\}}|}{r}}\langle
\tau_m \tau_{b_{L\setminus \{i,j\}}}\Lambda
\rangle_g^{r,(a,k_{L\setminus\{i,j\}})}\frac{1}{\eta_i-\eta_j}\\\nonumber
&\times \left[\frac{\eta_j\xi_{m+1}^{r,a}(\eta_i)}{1-\eta_i^r}
-\frac{\eta_i\xi_{m+1}^{r,a}(\eta_j)}{1-\eta_j^r}\right]
\xi_{b_{L\setminus\{i,j\}}}^{r,k_{L\setminus\{i,j\}}}(\eta_{L\setminus\{i,j\}})\\\nonumber
&+\frac{r}{2}\sum_{i=1}^l\left(\sum_{\substack{a+b+|k_{L\setminus\{i\}}|\equiv
0\\m+n+|b_{L\setminus\{i\}}|\leq
3g-5+l}}r^{\frac{a+b+|k_{L\setminus\{i\}}|}{r}}\langle\tau_m\tau_n\Lambda
\rangle_{g-1}^{r,(a,b,k_{L\setminus\{i\}})}\xi_{m+1}^{r,a}(\eta_i)\xi_{n+1}^{r,b}(\eta_i)
\xi_{b_{L\setminus\{i\}}}^{r,k_{L\setminus\{i\}}}(\eta_{L\setminus\{i\}})\right.\\\nonumber
&\left.+\sum_{\substack{g_1+g_2=g\\ I\coprod
J=L\setminus\{i\}}}^{stable}\sum_{\substack{a+|k_{I}|\equiv
0\\b+k_J\equiv 0\\m+|b_I|\leq 3g_1-2+|I|\\n+|b_J|\leq
3g_2-2+|J|}}r^{\frac{a+b+|k_{L\setminus\{i\}}|}{r}}\langle \tau_m
\tau_{b_I}\Lambda\rangle_{g_{1}}^{r,(a,k_I)} \langle
\tau_n\tau_{b_J}\Lambda\rangle_{g_2}^{r,(b,k_J)}\right.\\\nonumber
&\left.\times\xi_{m+1}^{r,a}(\eta_i)\xi_{n+1}^{r,b}(\eta_i)
\xi_{b_{L\setminus\{i\}}}^{r,k_{L\setminus\{i\}}}(\eta_{L\setminus\{i\}})\right).
\end{align}

\section{Proof of the main results}
In this section, we present the proofs  for the results showed in
Section 1. First, we introduce the new variable $t$ as
\begin{align}
t^r=\frac{1}{1-\eta^r}.
\end{align}
It is easy to get
\begin{align}
\frac{\eta}{1-\eta^r}\frac{d}{d\eta}=t^{r+1}(t^r-1)\frac{d}{dt}.
\end{align}
Hence, in the new variable $t$, \makeatletter
\let\@@@alph\@alph
\def\@alph#1{\ifcase#1\or \or $'$\or $''$\fi}\makeatother
\begin{subnumcases}
{\xi_{-1}^{r,k}(t)=} \frac{1}{r}\left(\frac{t^r-1}{t^r}\right),
&$k=0$,
\\\nonumber \frac{1}{kr^{k/r}}\frac{(t^r-1)^{\frac{k}{r}}}{t^k}, &$1\leq k\leq r-1$.
\end{subnumcases}
\makeatletter\let\@alph\@@@alph\makeatother and for $m\geq -1$, we
have
\begin{align}
\xi_{m+1}^{r,k}(t)=t^{r+1}(t^r-1)\frac{d}{dt}\xi_{m}^{r,k}(t).
\end{align}

\begin{lemma}
For $0\leq k\leq r-1$, the function $\xi_{m}^{r,k}(t)$ has the
following expansion form:
\begin{align}
\xi_{m}^{r,k}(t)=\frac{1}{r^{\frac{k}{r}}}t^{(m+1)r-k}(t^r-1)^{\frac{k}{r}}(c^k_{m,m}t^{mr}+c^k_{m,m-1}t^{(m-1)r}+\cdots
+c^k_{m,0}).
\end{align}
where the coefficients $c^k_{m,i}$, for $0 \leq i \leq m$, satisfy
certain recursion relations. In particularly, we have
\begin{align}
c^k_{m,m}=(2m-1)!!r^{m},\quad  c^k_{m,0}=\prod_{j=1}^m(k-jr).
\end{align}
\end{lemma}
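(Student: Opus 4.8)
The plan is to establish the expansion (30) by induction on $m\ge 0$, taking the defining recursion (29) as the engine and the initial data (28) as the base case. Writing $D:=t^{r+1}(t^r-1)\frac{d}{dt}$ for the operator appearing in (29), the entire argument reduces to understanding how $D$ acts on a single building block of the form $(t^r-1)^{k/r}t^{Nr-k}$, since the shape claimed in (30) is exactly $\frac{1}{r^{k/r}}(t^r-1)^{k/r}$ times a $\mathbb{Q}$-linear combination of such blocks with $N$ ranging over $m+1,\dots,2m+1$. Thus I expect the whole proof to be a controlled bookkeeping exercise once this one action is understood.

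For the base case one applies $D$ once to (28) to compute $\xi_0^{r,k}$. For $1\le k\le r-1$ the two terms produced by the product rule combine through the identity $t^r-(t^r-1)=1$ and give $\xi_0^{r,k}=\frac{1}{r^{k/r}}t^{r-k}(t^r-1)^{k/r}$, which is precisely (30) with $c^k_{0,0}=1$; this is consistent with both special values in (31) under the conventions $(-1)!!=1$ and $\prod_{j=1}^{0}(\cdot)=1$. The case $k=0$ needs a separate remark: one finds $\xi_0^{r,0}=t^r-1$, which differs from (30) by the additive constant $-1$. Since $D$ annihilates constants, this discrepancy never propagates, so (30) holds verbatim for all $m\ge 1$ when $k=0$, and I would simply start that branch of the induction at $m=1$ (checking $\xi_1^{r,0}=rt^{2r}(t^r-1)$ by hand).

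For the inductive step, assume (30) for $\xi_m^{r,k}$ and apply $D$ term by term. Differentiating $(t^r-1)^{k/r}t^{(m+1+i)r-k}$ produces one piece still carrying $(t^r-1)^{k/r}$ and one carrying $(t^r-1)^{k/r-1}$; multiplying by $t^{r+1}(t^r-1)$ restores the exponent $k/r$ in both, the second piece contributing a multiple of $(t^r-1)^{k/r}\bigl(t^{(m+3+i)r-k}-t^{(m+2+i)r-k}\bigr)$. Collecting the coefficient of $(t^r-1)^{k/r}t^{(m+2+j)r-k}$ and re-indexing the shifts $i\mapsto i+1$ and $i\mapsto i$ gives exactly the degree-$(m+1)$ polynomial predicted by (30), together with the coefficient recursion
\[
c^k_{m+1,j}=(m+j)r\,c^k_{m,j-1}-\bigl((m+1+j)r-k\bigr)c^k_{m,j},
\]
valid for $0\le j\le m+1$ under the convention $c^k_{m,-1}=c^k_{m,m+1}=0$.

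Finally, the two special values in (31) fall out of this recursion. Setting $j=m+1$ annihilates the second term and leaves $c^k_{m+1,m+1}=(2m+1)r\,c^k_{m,m}$, giving $c^k_{m,m}=(2m-1)!!\,r^m$ by induction; setting $j=0$ annihilates the first term and leaves $c^k_{m+1,0}=\bigl(k-(m+1)r\bigr)c^k_{m,0}$, giving $c^k_{m,0}=\prod_{j=1}^{m}(k-jr)$. I expect the only genuine obstacle to be the inductive step itself: one must verify that the two pieces coming from the product rule recombine into a clean $(t^r-1)^{k/r}$ times a polynomial in $t^r$, so that the exponent $k/r$ is genuinely preserved and no stray $(t^r-1)^{k/r\pm1}$ survives, and that the two index shifts are aligned correctly when reading off the coefficient of each $t^{jr}$. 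Everything else is forced by (28), (29), and the recursion above.
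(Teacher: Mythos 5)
Your proof is correct and takes essentially the same route as the paper: induction on $m$ via the operator $D=t^{r+1}(t^r-1)\frac{d}{dt}$ acting on the blocks $(t^r-1)^{k/r}t^{Nr-k}$, with the two stated values of $c^k_{m,m}$ and $c^k_{m,0}$ read off as the $j=m+1$ and $j=0$ specializations of your general coefficient recursion (the paper records only those two cases and starts the induction at $\xi_1^{r,k}$). Your remark that (30) fails by the harmless additive constant $-1$ at $(k,m)=(0,0)$ is a genuine edge case the paper silently glosses over by using $\frac{1}{r^{k/r}}t^{r}\bigl(\frac{t^r-1}{t^r}\bigr)^{k/r}$ in place of $\xi_0^{r,0}=t^r-1$; since $D$ annihilates constants this does not affect the result for $m\geq 1$.
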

\begin{proof}
By definition, we have
\begin{align}
\xi_1^{r,k}(t)&=t^{r+1}(t^r-1)\frac{d}{dt}\left(\frac{1}{r^{\frac{k}{r}}}t^r\left(\frac{t^r-1}{t^r}\right)^{\frac{k}{r}}\right)\\\nonumber
&=\frac{1}{r^{\frac{k}{r}}}t^{2r-k}(t^r-1)^{\frac{k}{r}}(r
t^r-(r-k)).
\end{align}
So $\xi_1^{r,k}(t)$ has the expansion form (30) showed in Lemma 3.1,
with
\begin{align}
c^k_{1,1}=r, \quad c^k_{1,0}=k-r.
\end{align}
When $m\geq 1$, one has
\begin{align}
\xi_{m+1}^{r,k}(t)&=t^{r+1}(t^r-1)\frac{d}{dt}\xi_{m}^{r,k}(t)\\\nonumber
&=\frac{1}{r^{\frac{k}{r}}}t^{r+1}(t^r-1)\frac{d}{dt}\left(t^{(m+1)r-k}(t^{r}-1)^{\frac{k}{r}}
\sum_{j=0}^m c^k_{m,j}t^{jr}\right)\\\nonumber &=
\frac{1}{r^{\frac{k}{r}}}t^{(m+2)r-k}(t^r-1)^{\frac{k}{r}}((2m+1)r
c^k_{m,m}t^{(m+1)r}+\cdots+(k-(m+1)r)c^k_{m,0}).
\end{align}
Therefore,
\begin{align}
c^k_{m+1,m+1}=(2m+1)rc^k_{m,m},\quad
c^k_{m+1,0}=(k-(m+1)r)c^k_{m,0}.
\end{align}
It is direct to obtain the formula (31) by the initial values in
formula (33).
\end{proof}

In terms of the new auxiliary functions $\xi_{m}^{r,k}(t)$,  the
formula (25) can be changed to
\begin{align}
&\sum_{\substack{|k_L|\equiv0\\|b_L|\leq
3g-3+l}}r^{\frac{|k_L|}{r}}\langle\tau_{b_L}\Lambda
\rangle_g^{r,k_L}\left[(2g-2+l)\xi_{b_L}^{r,k_L}(t_{L})+\frac{1}{r}\sum_{i=1}^{l}\frac{1}{t_i^r}\xi_{b_i+1}^{r,k_i}(t_i)
\xi_{b_{L\setminus\{i\}}}^{r,k_{L\setminus\{i\}}}(t_{L\setminus\{i\}})\right]\\\nonumber
&=\sum_{1\leq i<j\leq
l}\sum_{\substack{a+|k_{L\setminus\{i,j\}}|\equiv 0\\m+b_{L\setminus
\{i,j\}\leq 3g-4+l}}}r^{\frac{a+|k_{L\setminus\{i,j\}}|}{r}}\langle
\tau_m \tau_{b_{L\setminus \{i,j\}}}\Lambda
\rangle_g^{r,(a,k_{L\setminus\{i,j\}})}\xi_{b_{L\setminus\{i,j\}}}^{r,k_{L\setminus\{i,j\}}}(t_{L\setminus\{i,j\}})\\\nonumber
&\times
\left[\frac{t_i^{r+1}(t_j^r-1)^{\frac{1}{r}}\xi_{m+1}^{r,a}(t_i)-t_j^{r+1}(t_i^r-1)^{\frac{1}{r}}\xi_{m+1}^{r,a}(t_j)}
{t_j(t_i^r-1)^{\frac{1}{r}}-t_i(t_j^r-1)^{\frac{1}{r}}}\right]
\\\nonumber
&+\frac{r}{2}\sum_{i=1}^l\left(\sum_{\substack{a+b+|k_{L\setminus\{i\}}|\equiv
0\\m+n+|b_{L\setminus\{i\}}|\leq
3g-5+l}}r^{\frac{a+b+|k_{L\setminus\{i\}}|}{r}}\langle\tau_m\tau_n\tau_{b_{L\setminus\{i\}}}\Lambda
\rangle_{g-1}^{r,(a,b,k_{L\setminus\{i\}})}\xi_{m+1}^{r,a}(t_i)\xi_{n+1}^{r,b}(t_i)
\xi_{b_{L\setminus\{i\}}}^{r,k_{L\setminus\{i\}}}(t_{L\setminus\{i\}})\right.\\\nonumber
&\left.+\sum_{\substack{g_1+g_2=g\\
I\coprod J=L\setminus\{i\}}}^{stable}\sum_{\substack{a+|k_{I}|\equiv
0\\b+k_J\equiv 0\\m+|b_I|\leq 3g_1-2+|I|\\n+|b_J|\leq
3g_2-2+|J|}}r^{\frac{a+b+|k_{L\setminus\{i\}}|}{r}}\langle \tau_m
\tau_{b_I}\Lambda\rangle_{g_{1}}^{r,(a,k_I)} \langle
\tau_n\tau_{b_J}\Lambda\rangle_{g_2}^{r,(b,k_J)}\right.\\\nonumber
&\left.\times\xi_{m+1}^{r,a}(t_i)\xi_{n+1}^{r,b}(t_i)
\xi_{b_{L\setminus\{i\}}}^{r,k_{L\setminus\{i\}}}(t_{L\setminus\{i\}})\right).
\end{align}

\begin{lemma}
For  $0\leq a\leq r-1$, we have
\begin{align}
&\frac{t_i^{r+1}(t_j^r-1)^{\frac{1}{r}}\xi_{m+1}^{r,a}(t_i)-t_j^{r+1}(t_i^r-1)^{\frac{1}{r}}\xi_{m+1}^{r,a}(t_j)}
{t_j(t_i^r-1)^{\frac{1}{r}}-t_i(t_j^r-1)^{\frac{1}{r}}}\\\nonumber
&=\frac{1}{r^{\frac{a}{r}}}\sum_{p=0}^{m+1}c^a_{m+1,p}\left(\sum_{s=0}^{a-2}(t_i^r-1)^{\frac{a-1-s}{r}}(t_j^r-1)^{\frac{s+1}{r}}
t_i^{(m+3+p)r-(a-1-s)}t_j^{(m+3+p)r-(s+1)}\right.\\\nonumber
&-\left.\sum_{q=0}^{m+3+p}\binom{m+3+p}{q}(-1)^{q}\sum_{s=0}^{qr-a}(t_i^r-1)^{\frac{qr-s}{r}}(t_j^r-1)^{\frac{s+a}{r}}
t_i^{(m+3+p)r-(qr-s)}t_j^{(m+3+p)r-(s+a)}\right)
\end{align}
\begin{proof}
By a direct calculation, we have
\begin{align}
&\frac{t_i^{r+1}(t_j^r-1)^{\frac{1}{r}}\xi_{m+1}^{r,a}(t_i)-t_j^{r+1}(t_i^r-1)^{\frac{1}{r}}\xi_{m+1}^{r,a}(t_j)}
{t_j(t_i^r-1)^{\frac{1}{r}}-t_i(t_j^r-1)^{\frac{1}{r}}}\\\nonumber
&=\frac{1}{r^{\frac{a}{r}}}\frac{\frac{(t_i^r-1)^{\frac{1}{r}}}{t_i}\frac{(t_j^r-1)^{\frac{1}{r}}}{t_j}}
{\frac{(t_i^r-1)^{\frac{1}{r}}}{t_i}-\frac{(t_j^r-1)^{\frac{1}{r}}}{t_j}}\left[t_i^{(m+3)r-a+1}(t_i^r-1)^{\frac{a-1}{r}}
\sum_{p=0}^{m+1}c^a_{m+1,p}t_i^{pr}\right.\\\nonumber
&\left.-t_j^{(m+3)r-a+1}(t_j^r-1)^{\frac{a-1}{r}}\sum_{p=0}^{m+1}c^a_{m+1,p}t_j^{pr}\right.]\\\nonumber
&=\frac{1}{r^{\frac{a}{r}}}\frac{\eta_i\eta_j}{\eta_i-\eta_j}\left[\eta_i^{a-1}\sum_{p=0}^{m+1}c^a_{m+1,p}t_i^{m+3+p}
-\eta_j^{a-1}\sum_{p=0}^{m+1}c^a_{m+1,p}t_j^{m+3+p}\right]\\\nonumber
&=\frac{1}{r^{\frac{a}{r}}}\frac{\eta_i\eta_j}{\eta_i-\eta_j}\sum_{p=0}^{m+1}c^a_{m+1,p}
\left(\frac{\eta_i^{a-1}}{(1-\eta_i^{r})^{m+3+p}}-\frac{\eta_j^{a-1}}{(1-\eta_j^{r})^{m+3+p}}\right)\\\nonumber
&=\frac{1}{r^{\frac{a}{r}}}t_i^{(m+3+p)r}t_j^{(m+3+p)r}\frac{\eta_i\eta_j}{\eta_i-\eta_j}
\sum_{p=0}^{m+1}c^a_{m+1,p}\left(\eta_i^{a-1}(1-\eta_j^r)^{m+3+p}-\eta_j(1-\eta_i^r)^{m+3+p}\right)\\\nonumber
&=\frac{1}{r^{\frac{a}{r}}}t_i^{(m+3+p)r}t_j^{(m+3+p)r}\eta_i\eta_j\sum_{p=0}^{m+1}c^a_{m+1,p}\\\nonumber
&\times\left(\frac{\eta_i^{a-1}-\eta_{j}^{a-1}}{\eta_i-\eta_j}-\sum_{q=1}^{m+3+p}\binom{m+3+p}{q}(-1)^q\eta_i^{a-1}\eta_j^{a-1}
\left(\frac{\eta_i^{qr+1-a}-\eta_{j}^{qr+1-a}}{\eta_i-\eta_j}\right)\right)\\\nonumber
&=\frac{1}{r^{\frac{a}{r}}}t_i^{(m+3+p)r}t_j^{(m+3+p)r}\sum_{p=0}^{m+1}c^a_{m+1,p}\\\nonumber
&\times\left(\sum_{s=0}^{a-2}\eta_i^{a-1-s}\eta_j^{s+1}-\sum_{q=0}^{m+3+p}\binom{m+3+p}{q}(-1)^q\sum_{s=0}^{qr-a}\eta_i^{qr-s}\eta_j^{s+a}\right)\\\nonumber
&=\frac{1}{r^{\frac{a}{r}}}\sum_{p=0}^{m+1}c^a_{m+1,p}\left(\sum_{s=0}^{a-2}(t_i^r-1)^{\frac{a-1-s}{r}}(t_j^r-1)^{\frac{s+1}{r}}
t_i^{(m+3+p)r-(a-1-s)}t_j^{(m+3+p)r-(s+1)}\right.\\\nonumber
&-\left.\sum_{q=0}^{m+3+p}\binom{m+3+p}{q}(-1)^{q}\sum_{s=0}^{qr-a}(t_i^r-1)^{\frac{qr-s}{r}}(t_j^r-1)^{\frac{s+a}{r}}
t_i^{(m+3+p)r-(qr-s)}t_j^{(m+3+p)r-(s+a)}\right)
\end{align}
\end{proof}
\end{lemma}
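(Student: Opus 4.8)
The plan is to substitute the closed form of $\xi_{m+1}^{r,a}$ supplied by Lemma 3.1 and then let the change of variables $t^r=1/(1-\eta^r)$ from (26) do the work. Concretely, from (30) we have $\xi_{m+1}^{r,a}(t)=r^{-a/r}t^{(m+2)r-a}(t^r-1)^{a/r}\sum_{p=0}^{m+1}c^a_{m+1,p}t^{pr}$, and the governing identities are $(t^r-1)^{1/r}=\eta t$ together with $t^r=1/(1-\eta^r)$, so that every radical and every power of $t$ can be traded for a monomial in $\eta$. First I would substitute this expansion into both terms of the numerator and simplify the denominator via $t_j(t_i^r-1)^{1/r}-t_i(t_j^r-1)^{1/r}=t_it_j(\eta_i-\eta_j)$; after cancelling the common factor $t_it_j$ this collapses the left-hand side to the antisymmetric expression $r^{-a/r}\,\frac{\eta_i\eta_j}{\eta_i-\eta_j}\sum_{p=0}^{m+1}c^a_{m+1,p}\bigl(\eta_i^{a-1}(1-\eta_i^r)^{-(m+3+p)}-\eta_j^{a-1}(1-\eta_j^r)^{-(m+3+p)}\bigr)$.

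The second step is to clear the negative powers. Since $(1-\eta_i^r)^{-(m+3+p)}=t_i^{(m+3+p)r}$, I would factor $t_i^{(m+3+p)r}t_j^{(m+3+p)r}$ out of the $p$-th summand, rewriting the bracket as $t_i^{(m+3+p)r}t_j^{(m+3+p)r}\bigl(\eta_i^{a-1}(1-\eta_j^r)^{m+3+p}-\eta_j^{a-1}(1-\eta_i^r)^{m+3+p}\bigr)$, which is now a genuine polynomial in $\eta_i,\eta_j$ vanishing on the diagonal $\eta_i=\eta_j$. Expanding the two binomials $(1-\eta_\bullet^r)^{m+3+p}=\sum_q\binom{m+3+p}{q}(-1)^q\eta_\bullet^{qr}$ and separating the $q=0$ contribution from the $q\geq1$ contributions yields two pieces: the $q=0$ part gives $\eta_i^{a-1}-\eta_j^{a-1}$, and after factoring $\eta_i^{a-1}\eta_j^{a-1}$ out of the remainder the $q\geq1$ part gives $-\sum_{q\geq1}\binom{m+3+p}{q}(-1)^q\eta_i^{a-1}\eta_j^{a-1}(\eta_i^{qr-a+1}-\eta_j^{qr-a+1})$.

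The third step dispatches the division by $\eta_i-\eta_j$ using the finite geometric identity $\frac{x^N-y^N}{x-y}=\sum_{s=0}^{N-1}x^{N-1-s}y^s$, applied once with $N=a-1$ to the diagonal piece and once with $N=qr-a+1$ to each binomial piece; reinstating the prefactor $\eta_i\eta_j$ then produces exactly the monomials $\eta_i^{a-1-s}\eta_j^{s+1}$ (with $0\leq s\leq a-2$) and $\eta_i^{qr-s}\eta_j^{s+a}$ (with $0\leq s\leq qr-a$). Finally I would convert back to $t$ through $\eta_\bullet^N=(t_\bullet^r-1)^{N/r}t_\bullet^{-N}$, absorbing the surviving factor $t_i^{(m+3+p)r}t_j^{(m+3+p)r}$ into the $t$-exponents to reach the claimed right-hand side. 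The only real obstacle is bookkeeping: one must keep the $p$-dependent factor $t_i^{(m+3+p)r}t_j^{(m+3+p)r}$ correctly attached to each $p$-summand while it is pulled through the binomial expansion, and must apply the geometric identity with the correct $N$ in each regime so that the ranges for $s$ and the powers of $(t^r-1)$ come out as stated; a single off-by-one in $N$ or a dropped sign from $(-1)^q$ would corrupt both the summation ranges and the exponents.
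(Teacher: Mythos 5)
Your proposal is correct and follows essentially the same route as the paper's own proof: substitute the expansion of $\xi_{m+1}^{r,a}$ from Lemma 3.1, pass to the $\eta$ variables via $(t^r-1)^{1/r}=\eta t$ so the denominator becomes $t_it_j(\eta_i-\eta_j)$, expand $(1-\eta^r)^{m+3+p}$ binomially, split off $q=0$, and finish with the finite geometric identity before converting back to $t$. Your remark about keeping the $p$-dependent factor $t_i^{(m+3+p)r}t_j^{(m+3+p)r}$ inside the sum over $p$ is well taken, since the paper's displayed computation actually places that factor outside the $\sum_p$, a notational slip your bookkeeping avoids.
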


Proof of the Theorem 1.1:
\begin{proof}
When $\sum_{j=1}^lb_j=3g-3+l$, we consider the coefficient of the
monomial
\begin{align}
\prod_{j=1}^{l}(t_j^r-1)^{\frac{k_j}{r}}t_1^{(2b_1+2)r-k_1}\prod_{j=
2}^lt_j^{(2b_j+1)r-k_j}.
\end{align}
in the formula (36). This coefficient in the left hand side is equal
to
\begin{align}
\frac{1}{r}c^{k_1}_{b_1+1,b_1+1}\prod_{j=2}^lc^{k_j}_{b_j,b_j}\langle
\tau_{b_{L}}\rangle_g^{r,k_L}.
\end{align}

When $m=b_1+b_j-1$ and $a\equiv k_1+k_j$, for $j=2,..,l$, by Lemma
3.2, we denote the coefficient of the term
\begin{align}
(t_1^r-1)^{\frac{k_1}{r}}(t_j^r-1)^{\frac{k_j}{r}}t_1^{(2b_1+2)r-k_1}t_j^{(2b_j+1)r-k_j}
\end{align}
in the formula (37) as $C^{k_1,k_j}_{b_1,b_j}$. Then the coefficient
in the first term of the right hand side is
\begin{align}
\sum_{j=2}^lC^{k_1,k_j}_{b_1,b_j}\prod_{i\neq
1,j}c^{k_i}_{b_i,b_i}\langle\tau_{b_1+b_j-1}\tau_{b_{L\setminus\{1,j\}}}
\rangle_g^{r,(r\langle \frac{k_1+k_j}{r}
\rangle,k_{L\setminus\{1,j\}})}
\end{align}
This coefficient in the second term of the right hand side is
\begin{align}
&\frac{r}{2}\prod_{j=2}^lc^{k_j}_{b_j,b_j}\sum_{\substack{a+b=k_1\\m+n=b_1-2}}\left(
c^a_{m+1,m+1}c^b_{n+1,n+1}\langle\tau_m\tau_n\tau_{b_{L\setminus\{1\}}}\rangle_{g-1}^{r,(a,b,k_{L\setminus\{1\}})}\right.\\\nonumber
&\left.\sum_{\substack{g_1+g_2=g\\I\coprod
J=L\setminus\{1\}}}^{stable}\sum_{\substack{a+|k_I|\equiv0\\b+|k_J|\equiv
0}}c^a_{m+1,m+1}c^b_{n+1,n+1}\langle\tau_m\tau_{b_I}
\rangle_{g_1}^{r,(a,k_I)}
\langle\tau_n\tau_{b_J}\rangle_{g_2}^{r,(b,k_J)} \right)
\end{align}
Where the coefficient $c^{k_i}_{b_i,b_i}$ are given by formula (31).
Collecting the formulas (40),(42) and (43) together, we obtain the
Theorem 1.1.
\end{proof}

Proof of the Theorem 1.3:
\begin{proof}
Now we consider the lowest nonzero terms in the formula (36). The
rank of the Hodge bundle $\mathbb{E}_{g,k_L}^U$ is $g$ if all the
loop monodories are trivial, i.e. $k_i=0$, for $1\leq i\leq l$.
Otherwise, the rank is
\begin{equation}
    rk \mathbb{E}_{g,k_L}^U=g-1+\sum_{i=1}^l\langle -\frac{k_i}{r}\rangle.
    \label{}
\end{equation}

First, we consider the nontrivial case. So the smallest possible
$|b_L|$ is
\begin{align}
    |b_L|&=3g-3+l-(g-1+\sum_{i=1}^l\langle
    -\frac{k_i}{r}\rangle)\\\nonumber
    &=2g-2+N(\{k_L\})+\frac{|k_L|}{r}.
    \label{}
\end{align}
where we have used the notation $N(\{k_L\})$ to denote the number of
the set $\{i| k_i=0, 1\leq i\leq l\}$.

The lowest term of a $\xi_m^{r,k}(t)$ is
\begin{equation}
    \frac{1}{r^{k/r}} c^k_{m,0} t^{(m+1)r-k} (t^r-1)^{k/r}
    \label{}
\end{equation}
where $c^k_{m,0}= \prod_{j=1}^m(k-jr)$. When $|b_L|=\sum_{j=1}^l b_j
= 2g-2 +N(\{k_L\})+\frac{|k_L|}{r}$, consider the coefficient of the
monomial
\begin{equation}
    \prod_{j=1}^l (t_j^r-1)^{k_j/r} t_j^{(b_j+1)r-k_j}
    \label{}
\end{equation}in the formula (36).

This coefficient in the left hand side of (36) is
\begin{align}
   &(-r)^{rk \mathbb{E}^U_{g,k_L}}\langle \tau_{b_L}\lambda_{rk \mathbb{E}^U} \rangle_{g}^{r,k_L} \prod_{j=1}^l c^{k_j}_{b_j,0}(2g-2+l+\frac{1}{r}\sum_{i=1}^l
   (k_i-(b_i+1)r))\\\nonumber
   &=(-r)^{rk \mathbb{E}^U_{g,k_L}}\langle \tau_{b_L}\lambda_{rk \mathbb{E}^U} \rangle_{g}^{r,k_L} \prod_{j=1}^l c^{k_j}_{b_j,0}(2g-2+\frac{|k_L|}{r}-|b_L|)\\\nonumber
   &=-N(\{k_L\})(-r)^{rk \mathbb{E}^U_{g,k_L}} \prod_{j=1}^l c^{k_j}_{b_j,0}\langle \tau_{b_L}\lambda_{rk \mathbb{E}^U}^U \rangle_{g}^{r,k_L}
\end{align}

To compute coefficients in the second term of right hand side of
(36), first find that range of $m,n$ with nonzero contribution in
$\langle
\tau_m\tau_n\tau_{b_{L\setminus\{i\}}}\Lambda\rangle_{g-1}^{r,(a,b,k_{L\setminus\{i\}})}$
is
\begin{gather}
    m+n+|b_{L\backslash \{i\}}|\geq 2g-4+N(\{a,b,k_{L\setminus\{i\}}\})+\frac{|k_{L\backslash\{i\}}|+a+b}{r} \Leftrightarrow \\
    m+n \geq b_i-2+N(\{a,b\})-N(\{k_i\})+\frac{a+b-k_i}{r}
    \label{}
\end{gather}
Range of $m,n$ in
$\langle\tau_m\tau_{b_I}\Lambda\rangle_{g_1}^{r,(a,k_{I})}\langle\tau_n\tau_{b_J}\Lambda\rangle_{g_2}^{r,(b,k_J)}$
is
\begin{gather}
    m+|b_I|\geq 2g_1-2+N(\{a,k_I\})+\frac{|k_I|+a}{r} \\
    n+|b_J|\geq 2g_2-2+N(\{b,k_J\})+\frac{|k_J|+b}{r}
\end{gather}
Hence
\begin{align}
m+n\geq b_i-2+N(\{a,b\})-N(\{k_i\})+\frac{a+b-k_i}{r}.
\end{align}

Since $0\leq a,b,k_i<r$ and $\langle \frac{a+b-k_i}{r}\rangle=0$, it
is easy to obtain that
$b_i-2+N(\{a,b\})-N(\{k_i\})+\frac{a+b-k_i}{r}$ is equal to $b_i-1$
or $b_i-2$.

On the other hand, the lowest term of
$\xi_{m+1}^{r,a}(t_i)\xi_{n+1}^{r,b}(t_i)$ is
\begin{gather}
    (t_i^r-1)^{\frac{a+b}{r}} t_i^{(m+n+4)r-a-b}
\end{gather}
Only when $k_i=0$ and $a+b=r$, this term contributes a coefficient
$-1$ to the term
\begin{align}
(t_i^r-1)^{\frac{k_i}{r}}t_{i}^{(b_i+1)r-k_i}.
\end{align}
Hence, the coefficient of the monomial $\prod_{j=1}^l
(t_j^r-1)^{k_j/r} t_j^{(b_j+1)r-k_j}$ in the second term of right
hand side is
\begin{align}
&-\frac{r}{2}\sum_{i\in
Z}\left[\sum_{\substack{a+b=r\\m+n=b_i-2}}(-r)^{rk
\mathbb{E}_{g-1,(a,b,k_{L\setminus\{i\}})}^U}\langle\tau_m\tau_n\tau_{b_{L\setminus\{i\}}}\lambda^U_{rk
\mathbb{E}^U}
\rangle_{g-1}^{r,(a,b,k_{L\setminus\{i\}})}c_{m+1,0}^ac_{n+1,0}^b\prod_{j\neq
i}c_{b_j,0}^{k_j}\right.\\\nonumber
&\left.+\sum_{\substack{g_1+g_2=g\\
I\coprod J=L\setminus\{i\}}}^{stable}\sum_{\substack{a+|k_{I}|\equiv
0\\b+|k_J|\equiv 0\\m+|b_I|=
2g_1-2+N(\{k_I\})+\frac{a+|k_I|}{r}\\n+|b_J|=
2g_2-2+N(\{k_J\})+\frac{b+|k_J|}{r}}}(-r)^{rk\mathbb{E}^U_{g_1,(a,k_I)}+rk\mathbb{E}^U_{g_2,(b,k_J)}}\right.\\\nonumber
&\left.\times \langle \tau_m \tau_{b_I}\lambda^U_{rk
\mathbb{E}^U}\rangle_{g_{1}}^{r,(a,k_I)} \langle
\tau_n\tau_{b_J}\lambda^U_{rk
\mathbb{E}^U}\rangle_{g_2}^{r,(b,k_J)}c_{m+1,0}^ac_{n+1,0}^b\prod_{j\neq
i}c_{b_j,0}^{k_j}\right]
\end{align}

Similarly for the first term of right hand side, range of $m$ is
\begin{align}
m\geq N(\{a\})-N(\{k_i,k_j\})+b_i+b_j+\frac{a-k_i-k_j}{r}.
\end{align}
From the formula (37), only when $k_i, k_j=0$ and $a=0$ or $k_i=0$
and $a=k_j$, we take  $m=b_i+b_j-1$, then the left hand side of the
formula (37) can contribute a coefficient
$-\frac{1}{r^{\frac{a}{r}}}c^a_{b_i+b_j,0}$ to the term
$(t_i^r-1)^{\frac{k_i}{r}}t_i^{(b_i+1)r-k_i}(t_j^r-1)^{\frac{k_j}{r}}t_j^{(b_j+1)r-k_j}$.

If we define the index set $Z\subset L$ as
\begin{align}
Z=\{i|k_i=0, 1\leq i\leq l\}.
\end{align}
The coefficient of the monomial $\prod_{j=1}^l (t_j^r-1)^{k_j/r}
t_j^{(b_j+1)r-k_j}$ in the first term of right hand side is
\begin{align}
&-\left(\sum_{i\in Z}\sum_{j\in k_{L\setminus Z}}(-r)^{rk
\mathbb{E}^U_{g,(k_j,k_{L\setminus\{i,j\}})}}\langle
\tau_{b_i+b_j-1}\tau_{b_{L\setminus\{i,j\}}}\lambda_{rk
\mathbb{E}^U}^U
\rangle_g^{r,(k_j,k_{L\setminus\{i,j\}})}c_{b_i+b_j,0}^{k_j}\right.\\\nonumber
&\left.+\sum_{i,j\in Z,i< j}(-r)^{rk
\mathbb{E}^U_{g,(0,k_{L\setminus\{i,j\}})}}\langle\tau_{b_i+b_j-1}\tau_{b_{L\setminus\{i,j\}}}\lambda_{rk\mathbb{E}^U}^U
\rangle_g^{r,(0,k_{L\setminus\{i,j\}})}c_{b_i+b_j,0}^0
\right)\prod_{q\neq i,j}c^{k_q}_{q,0}.
\end{align}
By using the rank formula (19), it is easy to compute that the ranks
appearing in the formulas (48), (56) and (59) satisfy:
\begin{align}
rk \mathbb{E}^U_{g,(k_j,k_{L\setminus\{i,j\}})}&=rk
\mathbb{E}^U_{g,k_L}\\\nonumber rk
\mathbb{E}^U_{g,(0,k_{L\setminus\{i,j\}})}&=rk
\mathbb{E}^U_{g,k_L}\\\nonumber rk
\mathbb{E}^U_{g-1,(a,b,k_{L\setminus\{i\}})}&=rk
\mathbb{E}^U_{g,k_L}+1\\\nonumber rk \mathbb{E}^U_{g_1,(a,k_I)}+rk
\mathbb{E}^U_{g_2,(b,k_J)}&=rk \mathbb{E}^U_{g,k_L}+1.
\end{align}

 Finally, combining (48), (56) and (59)
together, we obtain the following formula
\begin{align}
    &|Z|\prod_{j=1}^l c^{k_j}_{b_j,0} \langle
    \tau_{b_L}\lambda^U_{rk\mathbb{E}^U}\rangle_g^{r,k_L}\\\nonumber
    &=\left(\sum_{i\in Z}\sum_{j\in k_{L\setminus Z}}\langle
\tau_{b_i+b_j-1}\tau_{b_{L\setminus\{i,j\}}}\lambda_{rk
\mathbb{E}^U}^U
\rangle_g^{r,(k_j,k_{L\setminus\{i,j\}})}c_{b_i+b_j,0}^{k_j}\right.\\\nonumber
&\left.+\sum_{i,j\in Z,i<
j}\langle\tau_{b_i+b_j-1}\tau_{b_{L\setminus\{i,j\}}}\lambda_{rk\mathbb{E}^U}^U
\rangle_g^{r,(0,k_{L\setminus\{i,j\}})}c_{b_i+b_j,0}^0
\right)\prod_{q\neq i,j}c^{k_q}_{q,0}\\\nonumber
&-\frac{r^2}{2}\sum_{i\in
Z}\left[\sum_{\substack{a+b=r\\m+n=b_i-2}}\langle\tau_m\tau_n\tau_{b_{L\setminus\{i\}}}\lambda^U_{rk
\mathbb{E}^U}
\rangle_{g-1}^{r,(a,b,k_{L\setminus\{i\}})}c_{m+1,0}^ac_{n+1,0}^b\prod_{j\neq
i}c_{b_j,0}^{k_j}\right.\\\nonumber
&\left.+\sum_{\substack{g_1+g_2=g\\
I\coprod J=L\setminus\{i\}}}^{stable}\sum_{\substack{a+|k_{I}|\equiv
0\\b+|k_J|\equiv 0}}\langle \tau_m \tau_{b_I}\lambda^U_{rk
\mathbb{E}^U}\rangle_{g_{1}}^{r,(a,k_I)} \langle
\tau_n\tau_{b_J}\lambda^U_{rk
\mathbb{E}^U}\rangle_{g_2}^{r,(b,k_J)}c_{m+1,0}^ac_{n+1,0}^b\prod_{j\neq
i}c_{b_j,0}^{k_j}\right]
\end{align}

So we have the formula (6) in Theorem 1.3.

As to the trivial monodromies case, i.e. $k_i=0$, for all  $1\leq
i\leq l$. We have $rk\mathbb{E}^U=g$. With the analogue analysis as
above, when $\sum_{i}b_i=2g-3+l$, the coefficients of
$\prod_{i=1}^{l}t_i^{(2b_i+1)r}$ at the left hand of formula (36)
takes the form
\begin{align}
(1-l)\langle\tau_{b_L}\lambda_g^U
\rangle_g^r\prod_{i=1}^{l}c^0_{b_i,0}.
\end{align}
While at the right hand side, only the first term has the
contribution
\begin{align}
\sum_{1\leq i<j\leq
l}\langle\tau_{b_i+b_j-1}\tau_{b_{L\setminus\{i,j\}}}\lambda_{g}^U
\rangle_g^{r}(-c^0_{b_i+b_j,0})\prod_{q\neq i,j}c^0_{b_q,0}.
\end{align}
where the coefficients $c^0_{b_i,0}$ is given by formula (31). So we
obtain
\begin{align}
\langle\tau_{b_L}
\lambda^U_{g}\rangle_g^{r}=\frac{1}{l-1}\sum_{1\leq i< j\leq
l}\frac{(b_i+b_j)!}{b_i!b_j!}\langle\tau_{b_i+b_j-1}\tau_{b_L\setminus\{i,j\}}\lambda_g^U\rangle_g^{r}.
\end{align}
Then by induction, we obtain the formula (4) in Theorem 1.3  .

\end{proof}

\proof[Acknowledgements] The authors would like to thank Hao Xu for
bringing the paper \cite{Zhou3} to their attentions. Thank Prof.
Kefeng Liu for useful discussions. This research is supported by
China Postdoctoral Science Foundation 2011M500986 and National
Science Foundation of China grants No.11201417.

$$ \ \ \ \ $$

\end{document}